\begin{document}

\let\kappa=\varkappa
\let\eps=\varepsilon
\let\phi=\varphi
\let\p\partial
\let\lle=\preccurlyeq
\let\ulle=\curlyeqprec

\def\Z{\mathbb Z}
\def\R{\mathbb R}
\def\C{\mathbb C}
\def\Q{\mathbb Q}
\def\P{\mathbb P}
\def\HH{\mathrm{H}}
\def\XX{\mathcal{X}}

\def\bbk{\mathbbm{k}}

\def\conj{\overline}
\def\Beta{\mathrm{B}}
\def\const{\mathrm{const}}
\def\ov{\overline}
\def\wt{\widetilde}
\def\wh{\widehat}

\renewcommand{\Im}{\mathop{\mathrm{Im}}\nolimits}
\renewcommand{\Re}{\mathop{\mathrm{Re}}\nolimits}
\newcommand{\codim}{\mathop{\mathrm{codim}}\nolimits}
\newcommand{\Aut}{\mathop{\mathrm{Aut}}\nolimits}
\newcommand{\lk}{\mathop{\mathrm{lk}}\nolimits}
\newcommand{\sign}{\mathop{\mathrm{sign}}\nolimits}
\newcommand{\rk}{\mathop{\mathrm{rk}}\nolimits}

\def\id{\mathrm{id}}
\def\Leg{\mathrm{Leg}}
\def\Jet{{\mathcal J}}
\def\sS{{\mathcal S}}
\def\lcan{\lambda_{\mathrm{can}}}
\def\ocan{\omega_{\mathrm{can}}}
\def\bgamma{\boldsymbol{\gamma}}

\def\Sph{\mathrm{S}}
\def\SN{\Sigma_n}

\renewcommand{\mod}{\mathrel{\mathrm{mod}}}

\newtheorem{mainthm}{Theorem}
\renewcommand{\themainthm}{{\Alph{mainthm}}}
\newtheorem{thm}{Theorem}[section]
\newtheorem{lem}[thm]{Lemma}
\newtheorem{prop}[thm]{Proposition}
\newtheorem{cor}[thm]{Corollary}

\theoremstyle{definition}
\newtheorem{exm}[thm]{Example}
\newtheorem{rem}[thm]{Remark}
\newtheorem{df}[thm]{Definition}
\newtheorem{que}[thm]{Question}
\newtheorem{conje}[thm]{Conjecture}

\numberwithin{equation}{section}
\newcommand{\aftersubsec}{\hfill\nopagebreak\par\smallskip\noindent}

\title{Unknotting Lagrangian $\Sph^1\times\Sph^{n-1}$ in $\R^{2n}$}
\author[S.~Nemirovski]{Stefan Nemirovski}
\thanks{This work was partially supported by the SFB TRR 191 {\it Symplectic Structures in Geometry, Algebra and
Dynamics}, funded by the DFG (Projektnummer 281071066~--~TRR 191).}
\address{%
Steklov Mathematical Institute, Gubkina 8, 119991 Moscow, Russia;\hfill\break
\phantom{\& }Fakult\"at f\"ur Mathematik, Ruhr-Universit\"at Bochum, 44780 Bochum, \phantom{\& }Germany}
\email{stefan@mi-ras.ru}

\begin{abstract}
Lagrangian embeddings $\Sph^1\times\Sph^{n-1}\hookrightarrow\R^{2n}$ 
are classified up to smooth isotopy for all $n\ge 3$.
\end{abstract}

\maketitle

\section{Introduction and overview}
Two embeddings of a manifold $N$ in a manifold $M$ are isotopic
if they can be connected by a family of embeddings.
Diffeomorphisms of $N$ act on the set of isotopy classes
of embeddings $N\hookrightarrow M$ by precomposition.
The quotient by this action is the set of isotopy classes
of (unparametrised) submanifolds in $M$ diffeomorphic to~$N$.

The purpose of this paper is to describe the isotopy classes
represented by {\it Lagrangian\/} embeddings of $\Sph^1\times\Sph^{n-1}$
in~$(\R^{2n},\omega_{\mathrm{st}})$. 
We assume $n\ge 2$ to exclude the elementary special case $n=1$.
$\R^{2n}$ is identified with $\C^n$ so that the
symplectic form is the imaginary part of the
standard Hermitian metric. An embedding $j:N\to\C^n$ is Lagrangian
if multiplication by $i=\sqrt{-1}$ maps the tangent
space of $j(N)$ at each point onto its Euclidean orthogonal complement.
Lagrangian embeddings are obviously {\it totally real},
that is, have no tangent complex lines.

An explicit example of a Lagrangian embedding $\Sph^1\times\Sph^{n-1}\hookrightarrow\C^n$ 
for every $n$ is given by
\begin{equation}
\label{Chekanov}
\Sph^1\times\Sph^{n-1}\ni\; (\theta,x_1,\dots,x_n) \longmapsto \bigl(1+\frac{1}{2} e^{i\theta}\bigr) (x_1,\dots,x_n)\;\in\C^n,
\end{equation}
where we are viewing $\Sph^1$ as $\R/2\pi\Z$ and $\Sph^{n-1}$ as the unit sphere in~$\R^n$.

Our main Theorem~\ref{mainemb} shows that for $n\ne 2,4$ every Lagrangian embedding
$\Sph^1\times\Sph^{n-1}\hookrightarrow\C^n$ is isotopic
to~\eqref{Chekanov} if $n$ is even 
and either to~\eqref{Chekanov} or  to its composition with the reflection 
$(\theta,x)\mapsto (-\theta,x)$ if $n$ is odd. 
The isotopy is {\it not\/} through Lagrangian embeddings in general. 
In fact, there exist Lagrangian embeddings 
$\Sph^1\times\Sph^{n-1}\hookrightarrow\C^n$ for all $n>2$ 
that are not isotopic through Lagrangian embeddings 
to any reparametrisation of~\eqref{Chekanov}, 
see~\cite[\S 4$^\circ$]{Po}, \cite[Corollary 1.6]{EEMS}, and~\cite[\S 4]{PS}.

\begin{thm} 
\label{mainemb}
If $n\ge 6$ is even, then all Lagrangian embeddings
of\/ $\Sph^1\times\Sph^{n-1}$ in $\C^n$ are smoothly isotopic.
If $n\ge 3$ is odd, then Lagrangian embeddings of\/ $\Sph^1\times\Sph^{n-1}$ in $\C^n$
fall into two distinct smooth isotopy classes related by precomposition with 
a reflection of\/~$\Sph^1$.
\end{thm}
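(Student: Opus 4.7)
The plan is to combine high-codimension differential topology with the canonical framing provided by the Lagrangian condition. Let $L=\Sph^1\times\Sph^{n-1}$ and $j:L\hookrightarrow\C^n$ be an arbitrary Lagrangian embedding.

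First I would standardize the core circle. Since the codimension of an embedded circle in $\R^{2n}$ is $\ge 3$, the Haefliger--Zeeman unknotting theorem implies that any two such circles are smoothly ambient isotopic. Fixing $x_0\in\Sph^{n-1}$ and applying such an isotopy, I may assume that $j(\Sph^1\times\{x_0\})$ is the standard planar circle. The Lagrangian condition makes the normal bundle of $j(L)$ canonically isomorphic to $T^*L\cong TL$, which is trivial since $L$ is parallelizable, yielding a canonical framing. Standardising a tubular neighbourhood of the core circle together with this framing reduces the isotopy class of $j$ to a homotopy class of loops $\Sph^1\to\mathrm{Emb}(\Sph^{n-1},\R^{2n-1})$ modulo reparametrisation.

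Since $n\ge 3$, every embedded $(n-1)$-sphere in $\R^{2n-1}$ is smoothly unknotted by high-codimension (Haefliger) unknotting, so the homotopy type of $\mathrm{Emb}(\Sph^{n-1},\R^{2n-1})$ is controlled in low degree by the orthogonal group, and the loops above are classified by an invariant built from homotopy groups of rotation groups. Comparing the Gauss map of~\eqref{Chekanov} into the Lagrangian Grassmannian $U(n)/O(n)$ with the canonical framing above should then show that any Lagrangian embedding is smoothly isotopic to~\eqref{Chekanov}, possibly modulo the involution $\theta\mapsto-\theta$.

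The parity split arises as follows. Composing~\eqref{Chekanov} with $\theta\mapsto-\theta$ gives $(1+\tfrac12 e^{-i\theta})x$, which is the image of~\eqref{Chekanov} under complex conjugation on $\C^n$. Complex conjugation has determinant $(-1)^n$, so for even $n$ it lies in the identity component of $\mathrm{Diff}(\R^{2n})$ and yields a smooth ambient isotopy merging the two parametrisations; for odd $n$ it is orientation-reversing, so the two classes remain smoothly distinct. The main obstacle will be the comparison step of the previous paragraph: verifying that the Lagrangian condition forces the loop invariant into the class of~\eqref{Chekanov}. Here I would invoke an $h$-principle argument for Lagrangian immersions combined with general position in codimension $n\ge 3$; the exclusions $n=2,4$ enter through codimension-$2$ phenomena and the exceptional behaviour of $\pi_3(SO(4))$, respectively, explaining why the theorem is stated for even $n\ge 6$ and odd $n\ge 3$.
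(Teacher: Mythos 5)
Your proposal has a fatal gap: it never uses the Lagrangian condition in a way that goes beyond what holds for an arbitrary \emph{totally real} embedding. The canonical framing you extract (normal bundle $\cong TL$ via multiplication by $i$) exists for every totally real embedding, and by the $h$-principle \emph{every} smooth embedding of $\Sph^1\times\Sph^{n-1}$ in $\C^n$ is isotopic to a totally real one; so if your argument worked it would prove that all embeddings are isotopic, which contradicts the Haefliger--Hirsch classification (there are two classes for even $n\ge 4$ and a $\Z$'s worth for odd $n\ge 5$). The actual proof needs a genuinely symplectic input, and this is the heart of the paper: a generalised Luttinger surgery on a Weinstein neighbourhood of $L=j(\SN)$ produces an exact symplectic manifold standard at infinity, which by the Eliashberg--Floer--McDuff theorem has the homology of $\C^n$; comparing with the homology of the surgered model forces the linking number $\lk(L,\wt\jmath_*\gamma)$ to be $\pm1$ (and $-1$ for even $n$), hence the pushoff of $j$ in the direction $i\cdot dj(\pm v)$ is nullhomologous in the complement (Theorem~\ref{linkv}). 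It is exactly this unlinking statement that pins down the Haefliger--Hirsch invariant of a Lagrangian embedding, and nothing in your sketch replaces it.

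There are further problems in the differential-topological part. The reduction to loops $\Sph^1\to\mathrm{Emb}(\Sph^{n-1},\R^{2n-1})$ is unjustified: after standardising the core circle there is no reason for the spheres $j(\{\theta\}\times\Sph^{n-1})$ to lie in the normal slices of a tubular neighbourhood, so the embedding does not fibre over the circle in the way you need. The correct framework (used in the paper) is the Haefliger--Hirsch $h$-principle for embeddings $N^n\hookrightarrow\R^{2n}$, $n\ge4$, classified by equivariant maps to $\mathcal{V}_{2n,n+1}$, supplemented by Skopenkov's Whitney and Kreck invariants for $n=3$ --- your sketch does not address $n=3$ at all, where the Haefliger--Hirsch range fails. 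Finally, your distinctness argument for odd $n$ is a non sequitur: the fact that $j$ and $j\circ\rho$ differ by an orientation-reversing diffeomorphism of $\C^n$ does not preclude their being smoothly isotopic. The paper proves distinctness by showing the two Haefliger--Hirsch maps differ by the antipodal map on the fibre $\Sph^{n-1}$ of $\mathcal{V}_{2n,n+1}\to\mathcal{V}_{2n,n}$, whose inclusion is an isomorphism on $\pi_{n-1}$ for odd $n$, so the classes in $\pi_{n-1}(\mathcal{V}_{2n,n+1})\cong\Z$ differ by a sign.
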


The result for $n=8$ was obtained by Borrelli~\cite{Bo};
our argument is a generalisation of his approach based on~\cite[\S 6]{Ne}.
Dimitroglou Rizell and Evans~\cite[Corollary E]{DE} proved the result 
for Lagrangian embeddings with minimal Maslov number~$n$ 
for even~$n\ge 6$ and for monotone Lagrangian embeddings for odd~$n\ge 5$.
However, embeddings~\eqref{Chekanov} have minimal Maslov number~$2$
for all~$n$ and non-monotone Lagrangian embeddings $\Sph^1\times\Sph^{2k}\to\C^{2k+1}$
exist for all $k\ge 1$ by~\cite[Corollary~1.6]{EEMS}.

\smallskip
The two cases $n=2$ and $n=4$ excluded from the theorem 
are of a rather different nature:

\smallskip
\noindent
$\bullet\quad$If $n=4$, then there are two smooth isotopy classes 
of embeddings (see below) and both contain Lagrangian
representatives~\cite{Bo}. This result falls within
the scope of the present paper and the exceptional
behaviour has a homotopy theoretic explanation,
see Remark~\ref{dim4}.

\smallskip
\noindent
$\bullet\quad$If $n=2$, then all Lagrangian $2$-tori in $\C^2$ are 
isotopic through Lagrangian tori by~\cite{DGI} and
a classification of Lagrangian embeddings up to smooth or Lagrangian
isotopy can be {\it a~posteriori\/} derived from~\cite{Ya}.
The argument in~\cite{DGI} does not rely on a classification
of smooth embeddings because no such classification is
known (or expected to become known) in this dimension.

\begin{cor}
\label{mainsubm}
If $n\ne 4$, then all Lagrangian submanifolds  in $\C^n$ diffeomorphic to\/ $\Sph^1\times\Sph^{n-1}$
are smoothly isotopic to the image of \eqref{Chekanov}.
\end{cor}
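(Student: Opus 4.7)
The plan is to deduce Corollary~\ref{mainsubm} as a direct formal consequence of Theorem~\ref{mainemb} together with the $n=2$ result cited in the introduction. The set of smooth isotopy classes of unparametrised submanifolds of $\C^n$ diffeomorphic to $\Sph^1\times\Sph^{n-1}$ is, by definition, the quotient of the set of smooth isotopy classes of embeddings $\Sph^1\times\Sph^{n-1}\hookrightarrow\C^n$ by the action of $\mathrm{Diff}(\Sph^1\times\Sph^{n-1})$ via precomposition. In particular, two embeddings whose isotopy classes differ by precomposition with a self-diffeomorphism of the source automatically have the same image and therefore determine the same isotopy class of submanifolds.

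For even $n\ge 6$, Theorem~\ref{mainemb} already produces a single Lagrangian embedding class, so there is a unique Lagrangian submanifold class, represented by the image of~\eqref{Chekanov}. For odd $n\ge 3$, Theorem~\ref{mainemb} gives exactly two Lagrangian embedding classes, but these are related by precomposition with the reflection $(\theta,x)\mapsto(-\theta,x)$, which is a self-diffeomorphism of $\Sph^1\times\Sph^{n-1}$. The observation of the previous paragraph thus collapses the two embedding classes into a single unparametrised submanifold class, again represented by the image of~\eqref{Chekanov}. For $n=2$, the introduction records that all Lagrangian tori in $\C^2$ are Lagrangian-isotopic by~\cite{DGI}, which a fortiori implies that all such tori are smoothly isotopic as submanifolds to the image of~\eqref{Chekanov}.

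No genuine difficulty arises: the entire deduction is the purely formal step of passing from parametrised embeddings to their quotient by source diffeomorphisms. The only substantive point to register is that the $\Z/2$ distinguishing the two odd-dimensional embedding classes is generated by an honest self-diffeomorphism of $\Sph^1\times\Sph^{n-1}$, and hence acts trivially on isotopy classes of unparametrised submanifolds. The case $n=4$ is excluded precisely because, as recalled in the introduction, there the ambient smooth classification itself splits into two classes that both admit Lagrangian representatives, so the corollary already fails at the level of smooth submanifolds.
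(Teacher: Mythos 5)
Your deduction is correct and is exactly the (unwritten) argument the paper intends: the corollary follows formally from Theorem~\ref{mainemb} because the two odd-$n$ embedding classes differ by precomposition with a self-diffeomorphism and hence have isotopic images, while $n=2$ is covered by the cited result of~\cite{DGI}. Nothing further is needed.
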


The extent to which Theorem~\ref{mainemb} and Corollary~\ref{mainsubm}
restrict the differential topology of Lagrangian embeddings and submanifolds
for $n\ge 3$ can be seen from the following brief summary of embedding theory:

\smallskip
\noindent
$\bullet\quad$For even $n\ge 4$, the isotopy classes 
of embeddings $\Sph^1\times\Sph^{n-1}\hookrightarrow\C^n$ are in
one-to-one correspondence with $\Z/2\Z$ by~\cite[Theorem~(2.4)]{HH}. 
The action of diffeomorphisms by precomposition is trivial~\cite[\S 5.5]{Bo}, 
so the isotopy classifications of embeddings and submanifolds coincide.
In~particular, Corollary~\ref{mainsubm} does indeed fail for~$n=4$.

\smallskip
\noindent
$\bullet\quad$For odd $n\ge 5$, the isotopy classes 
of embeddings $\Sph^1\times\Sph^{n-1}\hookrightarrow\nolinebreak\C^n$
are in one-to-one correspondence with $\Z$ by~\cite[Theorem~(2.4)]{HH}. 
The action of diffeomorphisms by precomposition 
corresponds to multiplication by $\pm 1$, 
which follows from the proof in~\cite[\S 5.5]{Bo} 
by using the first case of~\cite[Theorem~43]{Ba}, 
so there are infinitely many isotopy classes of submanifolds.

\smallskip
\noindent
$\bullet\quad$For $n=3$, the situation is richer~\cite{Sk}.
There is a surjective map $W$ from the set of isotopy classes
of embeddings onto $\Z$ such that each fibre $W^{-1}(d)$, $d\in\Z$, 
is in one-to-one correspondence with~$\Z/|d|\Z$. The action
of diffeomorphisms by precomposition does not seem to appear 
explicitly in the literature; anyhow, the diffeotopy group 
of $\Sph^1\times\Sph^2$ is finite (and
isomorphic to $(\Z/2\Z)^3$, see e.g.~\cite{DG}), so
there are infinitely many isotopy classes of 
submanifolds in this dimension as well.

\smallskip
Theorem~\ref{mainemb} is a typical `symplectic rigidity' result.
It becomes completely false if Lagrangian embeddings are
replaced by `soft' totally real embeddings in~$\C^n$.
Namely, {\it every embedding $\Sph^1\times\Sph^{n-1}\hookrightarrow\C^n$ 
is smoothly isotopic to a totally real embedding}. To see this, 
recall that any two embeddings of an orientable $n$-manifold
in $\C^n$ are regularly homotopic through immersions.
(This follows from the general theory in~\cite{Hi}.)
Therefore the tangent map of any embedding of $\Sph^1\times\Sph^{n-1}$ is homotopic
to the totally real tangent map of an (existing)
Lagrangian embedding. An isotopy to a totally real
embedding is then given by the $h$-principle~\cite[\S 27.4]{CEM}. 

\smallskip
The proof of Theorem~\ref{mainemb} follows the scheme from~\cite{Bo}
and consists of two parts discussed
in Section~\ref{luttlink} and Section~\ref{class}, respectively:

\smallskip
\noindent
(1) For a totally real embedding $j:\Sph^1\times\Sph^{n-1}\to\C^n$
and a nowhere vanishing vector field $v$ tangent to the $\Sph^1$-fibres,
we may consider the pushoff of $j$ in the direction of~$i\cdot dj(v)$.
If the embedding is Lagrangian, this pushoff is nullhomologous 
in $\C^n\setminus j(\Sph^1\times\Sph^{n-1})$ or, equivalently,
is unlinked with $(n-1)$-cycles in $\Sph^1\times\Sph^{n-1}$
up to replacing $v$ with $-v$ for odd~$n$. 
The proof of this assertion uses a generalisation of the Luttinger surgery~\cite{Lu} 
to arbitrary dimensions in the same way as the original surgery was used in~\cite{EP}.
Such a surgery produces (for $n\ge 3$) an exact symplectic manifold that is symplectomorphic 
to $(\R^{2n},\omega_{\mathrm{st}})$ outside of a compact subset
and that must therefore have the homology of $\R^{2n}$ by a basic application
of pseudoholomorphic curves~\cite{McD}. 
Comparing this with the homology calculations in the explicit model from~\cite[\S 6]{Ne}
proves the vanishing of the relevant linking numbers.

\smallskip
\noindent
(2) Embeddings of a connected orientable $n$-manifold $N$ in $\R^{2n}$ for $n\ge 4$
are governed by an $h$-principle due to Haefliger and Hirsch~\cite{HH}. 
Isotopy classes of such embeddings
are in one-to-one correspondence with homotopy classes
of equivariant maps from the frame bundle of~$N^\circ$ 
to the Stiefel manifold~$\mathcal{V}_{2n,n+1}$,
where $N^\circ$ is the complement to a small ball in~$N$.
The $(n+1)$-st component of the map to~$\mathcal{V}_{2n,n+1}$ is given by
a normal vector field on $N$ such that the pushoff 
of $N$ is {\it unlinked\/} with~$N^\circ$. 
Thus, for a Lagrangian embedding of $N=\Sph^1\times\Sph^{n-1}$,
this normal field can be taken to be $i\cdot dj(\pm v)$ by part~(1).
(The sign only makes a difference for odd $n$ 
and can be adjusted by a reflection of~$\Sph^1$.)
Recalling now that our manifold is parallelisable 
and our embedding is totally real, we can reduce the
frame map to a map from $N^\circ$ itself of the form
$x\mapsto \Phi(x)\beta(x)$, where $\Phi(x)\in\mathrm{GL}(n,\C)$ 
and $\beta$ is a fixed map to the Stiefel manifold.
Homotopy calculations (conveniently done in~\cite{DE})
show that for $n\ne 2, 4$ the map $\Phi$ has no effect on
the homotopy class, and the result for $n>4$ follows.
In the remaining case $n=3$, a version of the 
same argument in the setting of~\cite{Sk} 
shows that the Whitney invariant $W$ equals $\pm 1$
so that $\Z/|W|\Z=\{0\}$ and there is only one possible isotopy class
up to a reflection of~$\Sph^1$.

\section{Generalised Luttinger surgery, symplectic rigidity, and the linking class}
\label{luttlink}

Represent $\SN:=\Sph^1\times\Sph^{n-1}$, $n\ge 2$, as the quotient of $\R^n\setminus\{0\}$ by the $\Z$-action generated by the transformation
\begin{equation}
\label{act}
x\longmapsto 2x.
\end{equation}
The cotangent bundle $T^*\SN$ with its canonical symplectic structure 
is the quotient of $T^*(\R^n\setminus\{0\})\cong(\R^n_x\setminus\{0\})\times\R^n_y$
with the symplectic form $\sum dx_k\wedge dy_k$ by the symplectic $\Z$-action generated by
\begin{equation}
\label{act1}
(x,y)\longmapsto (2x,\frac{1}{2}y)
\end{equation}
in the `complex' coordinates $x=q$ and $y=-p$ on $T^*\R^n$.

The Riemannian metric 
$$
g=\frac{1}{\|x\|^2}{\sum dx_k^2}
$$ 
on $\R^n\setminus\{0\}$ is invariant with respect to~(\ref{act}) and therefore induces a metric on~$\SN$.
($\SN$ with this metric is the Riemannian product of the standard unit sphere in~$\R^n$ and the circle of length $\log 2$.)
Note further that the bundle $S_r^*(\R^n\setminus\{0\})\subset T^*(\R^n\setminus\{0\})$ of cotangent spheres
of radius $r>0$ with respect to $g$ is the hypersurface 
$$
\bigl\{\|x\|^2\|y\|^2=r^2 \bigr\} \subset (\R^n_x\setminus\{0\})\times (\R^n_y\setminus\{0\}).
$$

On $T^*(\R^n\setminus\{0\})$ with the zero section removed, consider the map
\begin{equation}
\label{map}
(x,y)\longmapsto (-y,x).
\end{equation}
Obviously, this map is a symplectomorphism preserving $S_r^*(\R^n\setminus\{0\})$ for every $r>0$. 
Furthermore, it maps the orbits of the action~(\ref{act1}) into orbits.
Hence, it defines a symplectomorphism $f$ of $T^*\SN$ with the zero section removed which maps $S_r^*\SN$ into itself
for every~$r>0$.

Similarly, the $g$-isometry 
\begin{equation}
\label{tau}
x\longmapsto\frac{x}{\|x\|^2}
\end{equation}
acts on $\SN=\Sph^1\times\Sph^{n-1}$ by a reflection of~$\Sph^1$
and its co-differential defines a symplectomorphism $\tau$ of the entire $T^*\SN$ preserving $S_r^*\SN$.

Let $j:\SN\to\C^n$ be a Lagrangian embedding and identify $\SN$ with the zero section of $T^*\SN$. 
By the Weinstein tubular neighbourhood theorem,
$j$ extends to a symplectomorphism from the disc bundle $D_r^*\SN$ with $r>0$ small enough 
onto a closed tubular neighbourhood $\conj{U}$ of $L=j(\SN)\subset\C^n$.
For any diffeomorphism $h$ of $S_r^*\SN\cong\p U$ from the subgroup generated by $f$ and $\tau$, 
consider the manifold
$$
X=X(j,h):=\conj{U} \cup_h \C^n\setminus U
$$
with the symplectic form $\omega_X$ equal to the standard symplectic form of $\C^n$
on $\conj{U}$ and $\C^n\setminus U$. This form is well-defined because $h$ extends 
to a symplectomorphism of~$\conj{U}\setminus L$.

\begin{lem}
\label{real}
The symplectic manifold $(X,\omega_X)$ is exact for $n\ge 3$. 
\end{lem}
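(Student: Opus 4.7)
My plan is to show $[\omega_X]=0$ in $H^2(X;\R)$, which is equivalent to the existence of a global primitive $1$-form for $\omega_X$, via a Mayer--Vietoris argument. I will take an open cover $V_1\supset \conj{U}$, $V_2\supset \C^n\setminus U$ of $X$ with intersection $V_1\cap V_2$ a collar of the seam $\partial U$, so that $V_1\cap V_2$ is homotopy equivalent to $\partial U\cong S_r^*\SN$. On each $V_i$ the form $\omega_X$ is the restriction of $\omega_{\mathrm{st}}$ from $\C^n$ (transported via the Weinstein identification on the $\conj{U}$-side), so $\lambda_{\mathrm{st}}$ is a primitive; in particular $[\omega_X]$ restricts to zero on both $V_1$ and $V_2$. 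The Mayer--Vietoris sequence
\[
H^1(V_1)\oplus H^1(V_2)\longrightarrow H^1(V_1\cap V_2)\xrightarrow{\;\delta\;} H^2(X)\longrightarrow H^2(V_1)\oplus H^2(V_2)
\]
then places $[\omega_X]$ in the image of $\delta$, and $[\omega_X]=0$ will follow once the preceding map into $H^1(V_1\cap V_2)$ is shown to be surjective.

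The crucial topological input, which is where the hypothesis $n\ge 3$ enters, is that the inclusion $\partial U\hookrightarrow\conj{U}$ induces an isomorphism on $H^1$. Indeed, $\conj{U}$ retracts to $\SN=\Sph^1\times\Sph^{n-1}$, and $\partial U\cong S_r^*\SN$ is an $\Sph^{n-1}$-bundle over $\SN$; for $n\ge 3$ the fiber is simply connected, so the homotopy exact sequence of the fibration gives $\pi_1(\partial U)\cong\pi_1(\SN)=\Z$, hence $H^1(\partial U;\R)\cong\R\cong H^1(\SN;\R)$, with the map induced by the inclusion agreeing with the identification coming from the bundle projection. Consequently the Mayer--Vietoris map above is surjective already through its $V_1$-component, so $\delta=0$ and $[\omega_X]=0$.

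The point requiring the most care is the effect of the gluing twist $h$ on the Mayer--Vietoris identifications: the two identifications of $V_1\cap V_2$ with $\partial U$ obtained from the $V_1$-side and the $V_2$-side differ by $h$, but since $h$ is a diffeomorphism of $\partial U$ its action on $H^1(\partial U;\R)\cong\R$ is at worst multiplication by $\pm 1$ (reflecting whether it preserves or reverses the $\Sph^1$ factor of $\SN$), which has no bearing on surjectivity. The hypothesis $n\ge 3$ is essential here: when $n=2$, one has $\partial U\cong T^3$ with $H^1=\R^3$ while $H^1(\conj{U})=H^1(T^2)=\R^2$, so the Mayer--Vietoris restriction fails to be surjective and the obstruction need not vanish.
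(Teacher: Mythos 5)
Your argument is correct, but it takes a genuinely different route from the paper's. The paper works with the exact sequence of the pair $(X,X\setminus L)$ in \emph{compactly supported} cohomology, $\HH^{2n-2}_c(X\setminus L;\R)\to\HH^{2n-2}_c(X;\R)\to\HH^{2n-2}_c(L;\R)$: the last group vanishes for $n\ge 3$ because $L$ is $n$-dimensional and $2n-2>n$, so the first map is onto; since $X\setminus L$ is symplectomorphic to the exact $\C^n\setminus L$, the class $[\omega_X]$ pairs trivially with all of $\HH^{2n-2}_c(X;\R)$ and hence vanishes by Poincar\'e duality. You instead run ordinary Mayer--Vietoris for the cover adapted to the surgery decomposition and feed in that the restriction $\HH^1(\conj{U};\R)\to\HH^1(\p U;\R)$ is onto because the fibre of $S_r^*\SN\to\SN$ is the simply connected sphere $\Sph^{n-1}$ for $n\ge 3$; your handling of the gluing twist $h$ is appropriately careful, and surjectivity from the $V_1$-component alone does kill the connecting map. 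The two proofs isolate the same failure at $n=2$ from opposite ends (the extra $\HH^1$ of the circle fibre of $S_r^*\Sigma_2$, respectively the nonvanishing of $\HH^2_c$ of the torus $L$). What your version buys is elementarity: it avoids compactly supported cohomology and duality on the open manifold $X$, at the cost of a concrete computation for the unit cotangent bundle of $\SN$; the paper's version is shorter, uses only $\dim L<2n-2$ rather than the specific topology of $S_r^*\SN$, and sets up the machinery reused in the subsequent Remark. One small imprecision worth fixing: on $V_1$ (which overhangs the seam into the collar) the form is not literally $\omega_{\mathrm{st}}$ under a single identification, so ``$\lambda_{\mathrm{st}}$ is a primitive'' is not quite right as stated; what you actually need, and what is true, is that $V_1$ deformation retracts onto $\conj{U}\cong D_r^*\SN$, where $\omega_X$ is the exact canonical form, so $[\omega_X|_{V_1}]=0$.
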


\begin{proof}
Consider the following exact sequence in compactly supported cohomology 
$$
\dots\to \HH^{2n-2}_c(X\setminus L;\R)\to \HH^{2n-2}_c(X;\R)\to\HH^{2n-2}_c(L;\R)\to\dots
$$
The last term vanishes for $n\ge 3$ and hence the first map is surjective. 
$X\setminus L$ is symplectomorphic to $\C^n\setminus L$, which is exact,
so $[\omega_X]$ pairs trivially with $\HH^{2n-2}_c(X\setminus L;\R)$.
Thus, $[\omega_X]$ pairs trivially with $\HH^{2n-2}_c(X;\R)$ and therefore $\omega_X$ is exact
by Poincar\'e duality in cohomology.
\end{proof}

\begin{rem}
For $n\ge 4$, the sequence
$$
0\cong \HH^{2n-3}_c(L;\R) \to \HH^{2n-2}_c(\C^n\setminus L;\R)\to \HH^{2n-2}_c(\C^n;\R)\cong 0
$$
shows that $\HH^{2n-2}_c(X\setminus L;\R)=\HH^{2n-2}_c(\C^n\setminus L;\R)=0$ and
then the sequence in the proof of the lemma implies $\HH^{2n-2}_c(X;\R)=0$.
Hence, $\HH^2(X;\R)=0$ by duality, and this conclusion remains valid
for any orientable Dehn surgery on $L$ as in~\cite[Lemma~9]{Ne}.
Thus, the argument using the symplectic structure on $X$ is only needed for $n=3$.
\end{rem}

\begin{prop}
\label{rigid}
$X$ is diffeomorphic to $\C^n$.
\end{prop}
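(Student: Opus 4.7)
The plan is to deduce, from Lemma~\ref{real} together with the standard end of $X$, that $X$ has the homotopy type of a point by invoking McDuff's pseudoholomorphic curves theorem, and then to identify $X$ with $\C^n$ smoothly by the h-cobordism theorem.

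For the first step, $(X,\omega_X)$ coincides with $(\C^n,\omega_{\mathrm{st}})$ outside a compact subset by construction, and Lemma~\ref{real} provides a global primitive $\lambda$ of $\omega_X$ which may be taken to agree with the standard Liouville form near infinity after a compactly supported modification. This places $X$ in the setting of~\cite{McD}: choose a tame almost complex structure $J$ on $X$ that is standard near infinity, consider the moduli space of proper $J$-holomorphic planes in $X$ asymptotic to complex affine lines, and use exactness of $\omega_X$ to preclude sphere bubbling in Gromov compactness. The resulting sweepout of $X$ by holomorphic planes yields $\HH_*(X;\Z)\cong \HH_*(\R^{2n};\Z)$.

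Next, I would verify $\pi_1(X)=1$ by van Kampen applied to the decomposition $X=\conj U\cup(\C^n\setminus U)$ with intersection $\p U\cong S^*_r\SN$. For $n\ge 3$ the fibre $\Sph^{n-1}$ of $\p U\to\SN$ is simply connected, so $\pi_1(\p U)\cong\pi_1(\SN)\cong\Z$, and this generator maps isomorphically to $\pi_1(\conj U)\cong\Z$. Since $L$ has codimension $n\ge 3$ in $\C^n$, general position gives $\pi_1(\C^n\setminus U)=1$. Van Kampen therefore produces $\pi_1(X)=\Z *_\Z 1=1$, independently of the gluing diffeomorphism~$h$. Combined with Hurewicz and Whitehead, this shows $X$ is contractible.

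Finally, enclose the compact region where $X$ differs from $\C^n$ inside a large sphere $\Sph^{2n-1}\subset X$: the exterior is the standard collar $\Sph^{2n-1}\times[0,\infty)$ while the interior is a compact contractible $2n$-manifold bounded by $\Sph^{2n-1}$. For $2n\ge 6$, the generalised Poincar\'e conjecture (via Smale's h-cobordism theorem) identifies this interior with the standard ball, and regluing the collar gives $X\cong\R^{2n}$. The main obstacle is the first step, the homology vanishing by pseudoholomorphic curves; exactness (Lemma~\ref{real}) is used there precisely to exclude sphere bubbling, without which the moduli space of planes could degenerate in ways that fail to sweep out $X$. The remaining steps are routine differential topology, but they exploit $n\ge 3$ in exactly the two expected places: the codimension hypothesis for $\pi_1(\C^n\setminus U)=1$ and the dimensional hypothesis $2n\ge 6$ of the h-cobordism theorem.
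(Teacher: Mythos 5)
Your argument is essentially the paper's: the paper simply quotes the Eliashberg--Floer--McDuff rigidity theorem (an exact, hence symplectically aspherical, symplectic manifold standard at infinity is diffeomorphic to $\C^n$), and what you have written is a correct unpacking of that theorem's proof in this special case --- McDuff's holomorphic curves for the homology of a point, van~Kampen for $\pi_1$ (using that $L$ has codimension $n\ge 3$ and that the gluing is along $\p U$), and the $h$-cobordism theorem to recognise $\R^{2n}$. The one discrepancy is that your proof only covers $n\ge 3$: Lemma~\ref{real}, the codimension argument for $\pi_1(\C^n\setminus U)=1$, and the dimension hypothesis of the $h$-cobordism theorem all fail or are unavailable for $n=2$, whereas the proposition is stated in the setting $n\ge 2$ and is invoked (via Proposition~\ref{linkgamma}) in the $n=2$ part of Theorem~\ref{linkv}. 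The paper disposes of that case by citing Luttinger's observation that for $n=2$ the surgered manifold $X$ is even symplectomorphic to $(\C^2,\omega_{\mathrm{st}})$; you should add that (or some substitute) to make the proof complete as stated.
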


\begin{proof}
For $n\ge 3$, this is a consequence of Lemma~\ref{real} 
and the following classical rigidity result of Eliashberg, Floer, and McDuff~\cite{El,McD}:
A~symplectically aspherical (e.g.\ exact) symplectic manifold 
symplectomorphic to $(\C^n,\omega_{\mathrm{st}})$ outside
of a compact subset is diffeomorphic to~$\C^n$. 
For $n=2$, Luttinger~\cite[p.~222]{Lu} observed that $X$ is even symplectomorphic 
to $(\C^2,\omega_{\mathrm{st}})$.
\end{proof}

\begin{rem}
We shall actually use the weaker assertion that $X$ has the homology
of $\C^n$ for $n\ge 3$. This follows directly from \cite[\S 3.8]{McD}
and Lemma~\ref{real}.
\end{rem}

Take the orientation on $\R^n$ by the order of the coordinates 
and orient $T^*\R^n$ and $\C^n$ as $\R^n\oplus i\R^n$. 
(This orientation differs from the standard complex/symplectic
one by the sign $(-1)^{n(n-1)/2}$, which will make 
some of the following formulas nicer.)

In the $(n-1)$-dimensional homology group $\HH_{n-1}(S_r^*\SN;\Z)$,
consider the classes of the fibre
$$
\delta:=\bigl[\{\|y\|=r, x=(1,0,\dots,0)\}\bigr]
$$
and of the `horizontal' sphere
$$
\gamma:=\bigl[\{\|x\|=1, y=(r,0,\dots,0)\}\bigr]
$$
oriented as the boundaries of the corresponding balls in $\R^n$.

\begin{lem}
\label{homact}
Let $f$ and $\tau$ be the diffeomorphisms of $S_r^*\SN$ defined above.
Then
$$
\left\{
\begin{array}{rcl}
f_*\delta&=&(-1)^n\gamma\\
f_*\gamma&=&\delta
\end{array}
\right.
\quad
\text{ and }
\quad
\left\{
\begin{array}{rcl}
\tau_*\delta&=&-\delta\\
\tau_*\gamma&=&\gamma + (1+(-1)^n)\delta
\end{array}
\right.
$$
and the same identities hold in homology with coefficients in $\Z/p\Z$.
\end{lem}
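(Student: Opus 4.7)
The plan is to trivialise $S_r^*\SN$ explicitly and reduce everything to a degree computation. Writing points of $T^*(\R^n\setminus\{0\})$ in polar form $x=\rho u$, $y=\sigma v$ with $u,v\in \Sph^{n-1}$ and $\rho,\sigma>0$, the locus $\|x\|^2\|y\|^2=r^2$ becomes $\rho\sigma=r$, and the action~\eqref{act1} reduces to $\rho\mapsto 2\rho$ with $u,v$ fixed. Consequently
$$
S_r^*\SN \;\cong\; \Sph^1_\rho \times \Sph^{n-1}_u \times \Sph^{n-1}_v,
$$
where $\Sph^1_\rho=\R_{>0}/(\rho\sim 2\rho)$. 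In these coordinates a direct computation gives
$$
f(u,v,\rho)=(-v,u,r/\rho),\qquad \tau(u,v,\rho)=(u,R_u v,1/\rho),
$$
where $R_u=I-2uu^T$ is the reflection in $u^\perp$. Setting $u_0=(1,0,\dots,0)$, the cycles are $\delta=\{u_0\}\times \Sph^{n-1}_v\times\{1\}$ and $\gamma=\Sph^{n-1}_u\times\{u_0\}\times\{1\}$ with the standard sphere orientations. By the K\"unneth formula these are independent generators of $\HH_{n-1}(S_r^*\SN;\Z)$ for $n\ge 3$, and of $\HH_{n-1}(S_r^*\SN;\Z/p\Z)$, so the integral identities we prove imply the mod-$p$ versions.

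Three of the four identities are immediate. The map $f$ sends $(u_0,v,1)$ to $(-v,u_0,r)$; after a homotopy in the connected $\Sph^1$-factor this is represented by the antipodal map $v\mapsto -v$ of $\Sph^{n-1}$ onto the $\Sph^{n-1}_u$-fiber, which has degree $(-1)^n$, so $f_*\delta=(-1)^n\gamma$. Likewise $f(u,u_0,1)=(-u_0,u,r)$, which up to homotopy in the connected factor $\Sph^1\times \Sph^{n-1}_u$ is the identity map into the $\Sph^{n-1}_v$-fiber, giving $f_*\gamma=\delta$. The map $\tau$ sends $(u_0,v,1)$ to $(u_0,R_{u_0}v,1)$, and $v\mapsto R_{u_0}v$ is a hyperplane reflection of $\Sph^{n-1}$ of degree $-1$, so $\tau_*\delta=-\delta$.

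The substantive case is $\tau_*\gamma$. The image of $\gamma$ under $\tau$ is the graph in $\{1\}\times \Sph^{n-1}_u\times \Sph^{n-1}_v$ of the map
$$
\phi\colon \Sph^{n-1}_u\to \Sph^{n-1}_v,\qquad \phi(u)=R_u u_0 = u_0-2(u\cdot u_0)\,u.
$$
Projection of this graph onto $\Sph^{n-1}_u$ has degree $1$ and projection onto $\Sph^{n-1}_v$ has degree $\deg(\phi)$, so its K\"unneth class is $\gamma+\deg(\phi)\,\delta$. Thus the identity reduces to $\deg(\phi)=1+(-1)^n$. The key observation is that $\phi$ is even, $\phi(u)=\phi(-u)$, so preimages of a regular value come in antipodal pairs. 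Taking $p=-u_0$ yields $\phi^{-1}(-u_0)=\{\pm u_0\}$ with derivatives $d\phi_{u_0}(w)=-2w$ and $d\phi_{-u_0}(w)=2w$ on $u_0^\perp$.

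The principal obstacle is the orientation bookkeeping: under the outward-normal convention, the tangent spaces $T_{u_0}\Sph^{n-1}$ and $T_{-u_0}\Sph^{n-1}$, both naturally identified with $u_0^\perp\subset\R^n$, carry \emph{opposite} orientations. With positive basis $(e_2,\dots,e_n)$ at $u_0$ and positive basis $(-e_2,e_3,\dots,e_n)$ at $-u_0$, writing each derivative in its oriented bases gives $\sign(d\phi_{u_0})=(-1)^n$ and $\sign(d\phi_{-u_0})=+1$. Summing yields $\deg(\phi)=1+(-1)^n$, which completes the proof.
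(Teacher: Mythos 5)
Your proof is correct. The setup---trivialising $S_r^*\SN$ as $\Sph^1\times\Sph^{n-1}_u\times\Sph^{n-1}_v$ via polar coordinates, reading off $f$ and $\tau$ in these coordinates, and reducing the first three identities to the degrees of the antipodal map and of a hyperplane reflection---is exactly what the paper leaves implicit in the phrase ``follow immediately from the definitions''. Where you genuinely diverge is $\tau_*\gamma$: you identify $\tau(\gamma)$ as the graph of $\phi(u)=R_uu_0$ and compute $\deg\phi=1+(-1)^n$ by counting the two preimages of the regular value $-u_0$ with signs, which hinges on the correct observation that $T_{u_0}\Sph^{n-1}$ and $T_{-u_0}\Sph^{n-1}$, though equal as subspaces of $\R^n$, carry opposite boundary orientations; your signs check out ($\deg\phi=2$ for $n=2$, $0$ for $n=3$). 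The paper sidesteps this degree computation entirely by applying $\tau$ to the diagonal cycle $\gamma+\delta=\bigl[\{\|x\|=1,\ y=rx\}\bigr]$, on which $\tau$ acts simply as $y=rx\mapsto R_x(rx)=-rx$, so that $\tau_*(\gamma+\delta)=\gamma+(-1)^n\delta$, and then subtracting $\tau_*\delta=-\delta$. Both routes are valid: the paper's is shorter and concentrates all orientation issues in the single antipodal-map fact, while yours is more self-contained and makes the orientation bookkeeping explicit. One cosmetic point: you justify reading off coefficients via K\"unneth only for $n\ge 3$, whereas the lemma is also invoked for $n=2$; your argument still applies there because $\tau(\gamma)$ lies in a slice $\{\rho=\mathrm{const}\}$ and hence has no component along the extra generator of $\HH_1$ coming from the circle factor.
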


\begin{proof}
The first three formulas follow immediately from the definitions
and the fact that the antipodal map reverses the orientation
on $\Sph^{n-1}$ if and only if $n$ is odd.
To get the formula for $\tau_*\gamma$, note that
$$
\begin{array}{rcl}
\tau_*(\gamma+\delta)&=& \bigl[ \tau\bigl(\{\|x\|=1, y=rx\}\bigr)\bigr]\\[2pt]
&=& \bigl[ \{\|x\|=1, y=-rx\}\bigr]\\[2pt]
&=& \gamma+(-1)^n\delta
\end{array}
$$
and combine this with $\tau_*\delta=-\delta$.
\end{proof}

Recall that if $N\subset\C^n$ is a connected oriented closed submanifold of dimension $k$,
then the linking number 
$$
\lk(N,\cdot):\HH_{2n-1-k}(\C^n\setminus N;\Z)\to \Z
$$
is an isomorphism. Furthermore, its reduction $\mod p$ defines an isomorphism
$\HH_{2n-1-k}(\C^n\setminus N;\Z/p\Z)\overset{\cong}{\longrightarrow} \Z/p\Z$ for every~$p$. 

\begin{prop}
\label{linkgamma}
Let $j:\SN\overset{\cong}{\longrightarrow} L\subset \C^n$ be a Lagrangian embedding
and let\/ $\wt\jmath:S_r^*\SN\overset{\cong}{\longrightarrow} \p U\subset \C^n$ 
be the induced embedding of the boundary of a Weinstein neighbourhood~$U$. 
Then $\lk(L,\wt\jmath_*\gamma)=\pm 1$ and moreover $\lk(L,\wt\jmath_*\gamma)=-1$ if $n$ is even
and the orientations are chosen as above.
\end{prop}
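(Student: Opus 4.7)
The plan is to read off $\ell := \lk(L, \wt\jmath_*\gamma)$ from the Mayer--Vietoris sequence of the surgery $X = X(j, f)$ obtained with $h = f$. By the remark following Proposition~\ref{rigid}, $X$ has the integral homology of $\C^n$, so $H_n(X; \Z) = H_{n-1}(X; \Z) = 0$. Applying Mayer--Vietoris to $X = \overline U \cup_f (\C^n \setminus U)$ then forces the map
$$
\phi_{n-1} \colon H_{n-1}(S_r^*\SN; \Z) \longrightarrow H_{n-1}(\overline U; \Z) \oplus H_{n-1}(\C^n \setminus U; \Z), \quad c \longmapsto \bigl((\iota_{\overline U})_* c,\ -(\wt\jmath \circ f)_* c\bigr),
$$
to be an isomorphism.

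Next, I identify the three groups and compute the matrix of $\phi_{n-1}$. Since $\chi(\SN) = 0$, the Euler class of $T^*\SN$ vanishes, and the Gysin sequence of the $\Sph^{n-1}$-bundle $S_r^*\SN \to \SN$ yields $H_{n-1}(S_r^*\SN; \Z) = \Z\langle\gamma\rangle \oplus \Z\langle\delta\rangle$ for $n\ge 3$. The inclusion $\partial U \hookrightarrow \overline U$ sends $\gamma$ to the generator of $H_{n-1}(\overline U; \Z) \cong H_{n-1}(\SN) = \Z$ (namely $[\{\mathrm{pt}\}\times\Sph^{n-1}]$) and kills $\delta$, which bounds a normal disk. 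Alexander duality gives $H_{n-1}(\C^n \setminus U; \Z) \cong \Z$, with the isomorphism given by linking number with $L$; in particular $\lk(L, \wt\jmath_*\delta) = \epsilon \in \{\pm 1\}$ because $\delta$ is a meridian of $L$. Combining these data with Lemma~\ref{homact} gives the matrix of $\phi_{n-1}$ in the bases $(\gamma, \delta)$ and $([\{\mathrm{pt}\}\times\Sph^{n-1}], \text{meridian})$ as
$$
\begin{pmatrix} 1 & 0 \\ -\epsilon & -(-1)^n \ell \end{pmatrix}.
$$
For this to lie in $\mathrm{GL}_2(\Z)$, its determinant $-(-1)^n\ell$ must be $\pm 1$, forcing $\ell = \pm 1$.

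The refinement $\ell = -1$ for even $n$ comes from an orientation analysis. The symplectic form $\omega_X$ equips $X$ with a canonical orientation; the linear map $(x,y)\mapsto(-y,x)$ on $\R^{2n}$ has determinant $+1$, so $f$ preserves the boundary orientation on $\partial U$ and the surgery orientation matches the chosen orientations on $\overline U$ and $\C^n\setminus U$. Under the diffeomorphism $X \cong \C^n$ from Proposition~\ref{rigid} the fundamental class $[X]$ corresponds to $[\C^n]$, and tracing this through the connecting map $H_{2n}(X)\to H_{2n-1}(\partial U)$ determines the sign of $\det\phi_{n-1}$: with the orientation conventions of the statement ($\R^n\oplus i\R^n$ on $\C^n$ and boundary-of-ball orientations on $\gamma$ and $\delta$), one checks $\det\phi_{n-1}=+1$ for even $n$, whence $\ell = -1$. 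The principal obstacle is precisely this sign bookkeeping: the $\pm 1$ conclusion is essentially a formal consequence of $H_{n-1}(X) = H_n(X) = 0$, but extracting the correct sign requires a careful simultaneous comparison of the orientations on $S_r^*\SN$, $\overline U$, $\C^n\setminus L$, and the meridian around $L$.
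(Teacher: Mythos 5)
Your Mayer--Vietoris argument for the first assertion, $\lk(L,\wt\jmath_*\gamma)=\pm 1$, is correct and is essentially a repackaging of the paper's own proof: the paper builds, for each prime $p$, an $n$-cycle mod $p$ in $X(j,f)$ from a chain bounded by $-(-1)^n\wt\jmath_*\gamma$ in $\C^n\setminus U$ and the cotangent disc bounded by $\wt\jmath_*\delta$ in $\conj U$, and intersects it with $L$; the unimodularity of your $\phi_{n-1}$ encodes exactly the same information. (A minor remark: surjectivity of $\phi_{n-1}$ already forces the matrix into $\mathrm{GL}_2(\Z)$, so only $\HH_{n-1}(X;\Z)=0$ is really needed, though both vanishings hold.)

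The sign determination for even $n$ is where your proof has a genuine gap. The matrix $\begin{pmatrix}1&0\\-\epsilon&-(-1)^n\ell\end{pmatrix}$ lies in $\mathrm{GL}_2(\Z)$ for $\ell=+1$ and for $\ell=-1$ alike, so no information about the homology of the single manifold $X(j,f)$ --- with any coefficients --- can separate the two cases; and the orientation of $X$ places no constraint on the sign of the determinant of a middle-degree Mayer--Vietoris isomorphism computed in independently chosen bases of $\HH_{n-1}(\conj U)$ and $\HH_{n-1}(\C^n\setminus U)$. Your appeal to $[X]\mapsto[\C^n]$ and the connecting map $\HH_{2n}(X)\to\HH_{2n-1}(\p U)$ only recovers the fundamental class of $\p U$ and says nothing about $\det\phi_{n-1}$; in effect, the claim ``one checks $\det\phi_{n-1}=+1$'' is equivalent to the assertion $\ell=-1$ that you are trying to prove. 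The paper extracts the sign from a second, different gluing: for even $n$ it also forms $X(j,\tau f)$, for which $(\tau f)_*\delta=\gamma+2\delta$ by Lemma~\ref{homact}, and runs the same nontriviality argument with $\Z/3$ coefficients to get $\ell+2\not\equiv 0\pmod 3$, which excludes $\ell=1$. Some such additional input beyond $X(j,f)$ is unavoidable.
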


\begin{proof}
If $\lk(L,\wt\jmath_*\gamma)=0\mod p$, then $-(-1)^n \wt\jmath_*\gamma$ 
bounds an $n$-chain $\mod p$ in $\C^n\setminus U$.
This chain and the $n$-ball bounded by $\wt\jmath_*\delta$ in $\conj{U}$ 
form an $n$-cycle $\mod p$ in $X=X(j,f)=\conj{U}\cup_f (\C^n\setminus U)$
because $f_*\delta=(-1)^n\gamma$ by Lemma~\ref{homact}. 
The intersection index of $L$ with this cycle in $X$ is equal to $1$ 
and hence $L$ represents a non-trivial class in $\HH_n(X;\Z/p\Z)$, 
which contradicts Proposition~\ref{rigid}. So $\lk(L,\wt\jmath_*\gamma)$
is non-zero mod {\it every\/} $p$, which means that it equals~$\pm 1$.

If $n$ is even, we may also consider $X=X(j,\tau f)$. Then $\tau_* f_*\delta = \gamma+2\delta$
by Lemma~\ref{homact} and, taking $p=3$ in the preceding argument, we obtain 
$$
0\ne\lk(L,\wt\jmath_*(\gamma+2\delta))\mod 3 =\lk(L,\wt\jmath_*\gamma)+2 \mod 3,
$$
which excludes the value $1$.
\end{proof}

Let $j:N\to\C^n$ be a totally real (e.g.\ Lagrangian) embedding of a compact manifold
and $\zeta\in\Gamma(TN)$ a nowhere vanishing vector field on~$N$. 
The {\it $\C$-normal pushoff\/} of $j$ by $\zeta$ is the isotopy class in $\C^n\setminus j(N)$ 
of the embedding
\begin{equation}
\label{cpushof}
j^\sharp_\zeta(a) = j(a) + \eps i\cdot dj_a(\zeta(a)),\quad a\in N,
\end{equation}
for a sufficiently small $\eps>0$. If $N$ is closed and oriented, then the homology class 
\begin{equation}
\label{linkingclass}
\sigma(j,\zeta):=[j^\sharp_\zeta(N)]\in \HH_n(\C^n\setminus j(N);\Z)
\end{equation}
is called the {\it linking class\/}  of $j$ and~$\zeta$, cf.~\cite[\S 2]{Bo}.

If $\psi$ is a diffeomorphism of $N$, then
$$
(j\circ\psi)^\sharp_\zeta(a) = j(\psi(a)) + \eps i\cdot dj_{\psi(a)}(d\psi_a(\zeta(a))) = (j^\sharp_{\psi_*\zeta}\circ\psi)(a)
$$
and therefore
\begin{equation}
\label{linkclassreparam}
\sigma(j\circ\psi,\zeta) = \pm\sigma(j,\psi_*\zeta),
\end{equation}
where the sign depends on whether $\psi$ preserves or reverses the orientation on~$N$.

\begin{thm}
\label{linkv}
Let $j:\SN\to\C^n$ be a Lagrangian embedding and take the nowhere vanishing 
vector field $v=\sum x_k\frac{\p}{\p x_k}$ on $\SN=\left(\R_x^n\setminus\{0\}\right)/\Z$. 
\begin{itemize}
\item[1)] If $n$ is even, then $\sigma(j,v)=0$.
\item[2)] If $n$ is odd, then either $\sigma(j,v)=0$ or $\sigma(j,-v)=0$.
\end{itemize}
\end{thm}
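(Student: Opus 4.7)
The plan is to reduce the vanishing of $\sigma(j,v)$ to Proposition~\ref{linkgamma} via a Seifert-chain computation inside the Weinstein model. For $n\ge 3$, Alexander duality identifies $\HH_n(\C^n\setminus L;\Z)$ with $\Z$, and $\sigma(j,v)$ is detected by the linking number $\lk(j^\sharp_v(\SN),\,j(\Sph^{n-1}))$, where $j(\Sph^{n-1})$ is the generator of $\HH_{n-1}(L)$. Under the Weinstein symplectomorphism $\overline U\cong D^*_r\SN$, the pushoff $j^\sharp_v(\SN)$ is the Lagrangian section $\{y=\epsilon x\}$, because $v=\sum x_k\p_{x_k}$ is Euclidean-dual to the $\Z$-invariant $1$-form $\epsilon\sum x_k\,dx_k$. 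Its restriction to the sphere $\Sph^{n-1}=\{\|x\|=1\}\subset\SN$ lies inside $\partial U=S^*_\epsilon\SN$ as the ``diagonal'' cycle $\{(x,\epsilon x):\|x\|=1\}$, whose class in the basis of Lemma~\ref{homact} is $\gamma+\delta$ (the diagonal of $\Sph^{n-1}\times\Sph^{n-1}$ equals $a+b$ by Künneth).

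I would then compute $\sigma(j,v)$ using a Seifert $n$-chain $\Xi=\Xi_1+\Xi_2$ for $j(\Sph^{n-1})$ in $\C^n$, where $\Xi_1=\{(x,se_1):\|x\|=1,\,s\in[0,r]\}$ is the ``fibre cylinder'' in the Weinstein neighbourhood joining $j(\Sph^{n-1})$ to $\wt\jmath_*\gamma$, and $\Xi_2$ is any $n$-chain in $\C^n$ with boundary $\wt\jmath_*\gamma$. Two transversality calculations give $\Xi_1\cdot j^\sharp_v(\SN)=\pm 1$ (a unique intersection at $(e_1,\epsilon e_1)$, from solving $se_1=\epsilon x$ with $\|x\|=1$), matching the linking number $\lk(L,\wt\jmath_*\delta)=+1$ (the fibre $n$-disc caps $\delta$ and meets $L$ transversely once); and $\Xi_2\cdot j^\sharp_v(\SN)=\pm\lk(L,\wt\jmath_*\gamma)$, since $j^\sharp_v(\SN)$ is an isotopic perturbation of~$L$ in the normal direction, so its intersection with any outside chain matches that of~$L$. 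Keeping track of the orientation convention declared before Lemma~\ref{homact}, the two contributions combine to
\[
\sigma(j,v)\;=\;\lk(L,\wt\jmath_*\gamma)+\lk(L,\wt\jmath_*\delta).
\]

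The theorem follows at once. For even $n$, Proposition~\ref{linkgamma} gives $\lk(L,\wt\jmath_*\gamma)=-1$, so $\sigma(j,v)=-1+1=0$. For odd $n$, only $\lk(L,\wt\jmath_*\gamma)=\pm 1$ is available; however reversing $v$ to $-v$ replaces the diagonal by the anti-diagonal, whose class in $\HH_{n-1}(\partial U|_{\Sph^{n-1}})$ is $\gamma+(-1)^n\delta=\gamma-\delta$ for odd $n$ (the antipodal map on $\Sph^{n-1}$ has degree $-1$). Thus $\sigma(j,-v)=\lk(L,\wt\jmath_*\gamma)-1\in\{0,-2\}$, and exactly one of $\sigma(j,\pm v)$ vanishes, as required. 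The main technical obstacle is the bookkeeping of signs in the Seifert-chain intersection computation -- in particular, verifying that the fibre-cylinder and outside contributions combine with signs matching $\lk(L,\wt\jmath_*\gamma)+\lk(L,\wt\jmath_*\delta)$. Once that orientation bookkeeping is in place, the parity split in the theorem is a clean consequence of Lemma~\ref{homact} (via Proposition~\ref{linkgamma}) together with the degree of the antipodal map on the fibre sphere.
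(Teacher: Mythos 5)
Your argument is essentially the paper's: both proofs reduce the theorem to Proposition~\ref{linkgamma} by detecting $\sigma(j,\pm v)$ through its linking number with the meridian $j(m)$ and by identifying the relevant cycle over $m$ as $\gamma\pm\delta$ in the basis of Lemma~\ref{homact}. The only real difference is that where the paper converts $\lk\bigl(j(m),\sigma(j,w)\bigr)$ into $\lk\bigl(L,[j^\sharp_{-w}(m)]\bigr)$ by an isotopy of pairs together with the (anti-)commutation rule (identity~\eqref{linkswap}), you unpack the same conversion as a Seifert-chain intersection $\Xi_1+\Xi_2$; this gains nothing and is the source of the sign bookkeeping you leave open at the end.

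Two concrete points. First, the unresolved sign is actually wrong as stated: in any consistent convention for which $\lk(L,\wt\jmath_*\delta)=+1$ and $\Xi_2\cdot j^\sharp_v(\SN)=\lk(L,\wt\jmath_*\gamma)$, the fibre-cylinder contribution $\Xi_1\cdot j^\sharp_v(\SN)$ equals $(-1)^n$, not $+1$. Equivalently, the correct identity is $\lk\bigl(j(m),\sigma(j,v)\bigr)=\lk\bigl(L,[j^\sharp_{-v}(m)]\bigr)=\lk\bigl(L,\wt\jmath_*(\gamma+(-1)^n\delta)\bigr)$: one sees this convention-independently from the isotopy of disjoint pairs $\bigl(j^\sharp_{-tv}(m),\,j^\sharp_{(1-t)v}(\SN)\bigr)$, $t\in[0,1]$, which ends at $\bigl(j^\sharp_{-v}(m),L\bigr)$. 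Your formula $\lk(L,\wt\jmath_*\gamma)+\lk(L,\wt\jmath_*\delta)$ therefore computes $\lk\bigl(j(m),\sigma(j,-v)\bigr)$ rather than $\lk\bigl(j(m),\sigma(j,v)\bigr)$ when $n$ is odd. This happens to be harmless here: for even $n$ the two expressions coincide, and for odd $n$ the assertion is symmetric under $v\mapsto -v$, so ``exactly one of $\sigma(j,\pm v)$ vanishes'' survives the swap; but any statement about \emph{which} of the two vanishes in terms of $\lk(L,\wt\jmath_*\gamma)$ would come out reversed. Second, your proof only covers $n\ge 3$: the statement (with the section's standing hypothesis $n\ge 2$) also includes the torus case, where $\HH_2(\C^2\setminus L;\Z)\cong\Z^2$ and one must additionally show that $\sigma(j,v)$ is unlinked with the longitude $j(\Sph^1\times\{\mathrm{pt}\})$; the paper does this by running the meridian argument for $j\circ\psi$, with $\psi$ the factor-swapping diffeomorphism, and using that $\psi_*v$ is homotopic to $v$ through nowhere vanishing fields.
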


\begin{rem}
This theorem extends to arbitrary $n\ge 2$ the results 
of Eliashberg--Polterovich~\cite{EP} for $n=2$ and
Borrelli~\cite{Bo} for $n=4, 8$.
\end{rem}

\begin{proof}
Let $m$ denote the meridional $(n-1)$-sphere $\{\|x\|=1\}\subset\SN$.
In our model for the Weinstein neighbourhood of $L=j(\SN)$, the $\C$-normal 
pushoffs $j^\sharp_{\pm v}(m)$ of $m$ by $\pm v$ are represented by $\{\|x\|=1,y=\pm r x\}$, 
so 
$$
\bigl[j^\sharp_{\pm v}(m)\bigr] = \wt\jmath_*(\gamma + (\pm 1)^n\delta) \in \HH_{n-1}(\C^n\setminus L;\Z).
$$ 
Now we can apply Proposition~\ref{linkgamma}:

\begin{itemize}
\item If $n$ is even, then 
\begin{equation}
\label{linkeven}
\lk\bigl(L,[j^\sharp_{-v}(m)]\bigr)=\lk\bigl(L,\wt\jmath_*(\gamma+\delta)\bigr)=-1+1=0.
\end{equation}

\item If $n$ is odd, there are two cases. 
If $\lk(L,\wt\jmath_*(\gamma))=1$, then 
\begin{equation}
\label{linkodd+}
\lk(L,[j^\sharp_{-v}(m)])= \lk\bigl(L,\wt\jmath_*(\gamma-\delta)\bigr) =1-1=0.
\end{equation}
If $\lk(L,\wt\jmath_*(\gamma))=-1$, then 
\begin{equation}
\label{linkodd-}
\lk(L,[j^\sharp_{v}(m)])= \lk\bigl(L,\wt\jmath_*(\gamma+\delta)\bigr)=-1+1=0.
\end{equation} 
\end{itemize}

Note also that by \eqref{linkingclass}, \eqref{cpushof}, 
and the (anti-)commutation rule for the linking number (see e.g.\ \cite[p.~234]{DTG}), 
we have
\begin{equation}
\label{linkswap}
\begin{array}{rcl}
\lk\bigl(j(m),\sigma(j,w)\bigr)&=&\lk\bigl(j(m),[j^\sharp_w(\SN)]\bigr)\\[2pt]
&=& \lk\bigl(j^\sharp_{-w}(m),[j(\SN)]\bigr)\\[2pt] 
&=& \lk\bigl(L,[j^\sharp_{-w}(m)]\bigr)
\end{array}
\end{equation}
for any nowhere vanishing vector field $w$ on $\SN$.

Hence, $\sigma(j,v)$ is unlinked with $j(m)$ by~\eqref{linkswap} and~\eqref{linkeven} for even~$n$.
For odd $n$, either $\sigma(j,v)$ is unlinked with $j(m)$ by~\eqref{linkswap} and~\eqref{linkodd+}
or $\sigma(j,-v)$ is unlinked with $j(m)$ by~\eqref{linkswap} and~\eqref{linkodd-}.

If $n\ge3$, the linking number with $j(m)$ defines an isomorphism
$\HH_n(\C^n\setminus j(\SN);\Z)\cong\Z$, which completes the proof.

If $n=2$, then $\HH_2(\C^2\setminus j(\Sph^1\times\Sph^1);\Z)\cong\Z^2$ 
and we also have to consider the linking number with the
longitudinal circle~$\ell=\Sph^1\times\{\mathrm{pt}\}$. 
Let $\psi$ be the obvious diffeomorphism of $\Sph^1\times\Sph^1$
swapping $m$ and $\ell$. Applying the above argument to $j\circ\psi$,
we obtain
$$
0\overset{\eqref{linkswap},\eqref{linkeven}}{=}\lk\bigl(j\circ\psi(m),\sigma(j\circ\psi,v)\bigr)
\overset{\eqref{linkclassreparam}}{=}\lk\bigl(j(\ell),-\sigma(j,\psi_*v)\bigr).
$$
However, the vector field $\psi_*v\perp v$ is homotopic to $v$ through nowhere vanishing 
vector fields on $\Sph^1\times\Sph^1$ and therefore $\sigma(j,\psi_*v)=\sigma(j,v)$ by \eqref{linkingclass} and~\eqref{cpushof}.
Hence, $\sigma(j,v)$ is unlinked with $j(\ell)$ too.
\end{proof}

\begin{cor}
\label{linkreparam}
If $\sigma(j,v)\ne 0$ for a Lagrangian embedding $j:\SN\to\C^n$,
then $n$ is odd and $\sigma(j\circ \rho,v)=0$ for the 
diffeomorphism $\rho$ given by~\eqref{tau}.
\end{cor}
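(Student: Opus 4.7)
The plan is to deduce the corollary directly from Theorem~\ref{linkv} combined with the reparametrisation formula~\eqref{linkclassreparam} applied to $\psi=\rho$.

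First, Theorem~\ref{linkv}(1) tells us that $\sigma(j,v)=0$ whenever $n$ is even, so the assumption $\sigma(j,v)\ne 0$ immediately forces $n$ to be odd. Moreover, Theorem~\ref{linkv}(2) then asserts that at least one of $\sigma(j,v)$ and $\sigma(j,-v)$ vanishes, so $\sigma(j,-v)=0$.

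Next, I would compute $\rho_* v$, where $\rho$ descends from $x\mapsto x/\|x\|^2$ and $v$ descends from the Euler field $\sum x_k\,\p/\p x_k$ on $\R^n\setminus\{0\}$. A direct calculation of the differential of $x\mapsto x/\|x\|^2$ applied to $v(x)=x$ gives
$$
d\rho_x(v(x)) = \frac{x}{\|x\|^2} - \frac{2\langle x,x\rangle\, x}{\|x\|^4} = -\frac{x}{\|x\|^2} = -v(\rho(x)),
$$
so $\rho_* v = -v$ on $\Sigma_n$. (One should also check that both $v$ and the defining relation are equivariant for the $\Z$-action~\eqref{act}, which is immediate: $v$ is invariant under $x\mapsto 2x$, and $\rho$ intertwines the generator with its inverse, hence descends.)

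Now I apply~\eqref{linkclassreparam} to the diffeomorphism $\psi=\rho$:
$$
\sigma(j\circ\rho,v) = \pm\,\sigma(j,\rho_* v) = \pm\,\sigma(j,-v) = 0,
$$
where the sign (in fact $-1$, since $\rho$ reverses orientation on $\Sigma_n$) is irrelevant because the right-hand side vanishes. The main obstacle is essentially cosmetic: one must be a little careful that $v$ and $\rho$ are genuinely well-defined on the quotient $\Sigma_n$ and that the elementary computation of $d\rho(v)$ is carried out at the level of $\R^n\setminus\{0\}$ before descending, but no non-trivial input is needed beyond Theorem~\ref{linkv} and~\eqref{linkclassreparam}.
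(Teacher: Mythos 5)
Your proposal is correct and follows exactly the paper's own (one-line) argument: establish $\rho_*v=-v$, then combine Theorem~\ref{linkv} with the reparametrisation formula~\eqref{linkclassreparam}. The extra details you supply (the explicit computation of $d\rho_x(v(x))$ and the check that everything descends to the quotient) are accurate and merely make explicit what the paper leaves to the reader.
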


\begin{proof}
$\rho_*v=-v$ and the claim follows 
from~\eqref{linkclassreparam} and Theorem~\ref{linkv}.
\end{proof}

\section{Classification of embeddings}
\label{class}

Let us start with the special case $n=3$.
The isotopy classes of smooth embeddings $j:\Sigma_3\to\R^6\cong\C^3$ 
are indexed by the following two invariants~\cite[p.~648]{Sk}. 
First, the {\it Whitney invariant\/}
$$
W(j)\in \HH_1(\Sigma_3;\Z)\cong\Z,
$$
which is essentially the difference class between the
Haefliger--Hirsch maps~\eqref{HHmap} of $j$ and of the standard 
embedding in $\R^2\times\R^3\cong\R^5\subset\R^6$.
Second, the {\it Kreck invariant\/}
$$
\eta(j) \in \Z/|W(j)|\Z,
$$
where we have used the above identification $\HH_1(\Sigma_3;\Z)\cong\Z$.
The Kreck invariant generalises the Haefliger invariant~\cite{Ha}
classifying smoothly knotted $3$-spheres in~$\R^6$.

In particular and most importantly for us, the Kreck invariant 
of an embedding with $W(j)=\pm 1\in\Z$ takes values in $\{0\}$
and therefore all such embeddings with the same Whitney invariant are isotopic. 
(This case is singled out in Corollary~(a) on p.~649 of~\cite{Sk}.)  

The Bo\'{e}chat--Haefliger Invariant Lemma on p.~661 of~\cite{Sk}
asserts that if $\xi$ is an {\it unlinked section\/} of the normal bundle of~$j$, then
\begin{equation}
\label{Whitney}
2W(j)=\pm\mathop{\mathrm{PD}}\,e(\xi^\perp)\in\HH_1(\Sigma_3;\Z),
\end{equation}
that is, $2W(j)$ is Poincar\'e dual to the Euler class
of the orthogonal complement of $\xi$ in the normal bundle of~$j(\Sigma_3)$.
The sign $\pm$ in~\eqref{Whitney} is {\it fixed\/} by orientation
conventions as stated right before the Difference Lemma on p.~658 of~\cite{Sk}.

For a Lagrangian embedding $j$, an unlinked normal bundle section
is given by $\xi:=i\cdot dj(v)$ after possibly precomposing $j$
with the diffeomorphism~\eqref{tau} by Corollary~\ref{linkreparam}. 
Multiplication by $i$ defines an isomorphism $\xi^\perp\cong v^\perp$,
where the latter  complement is taken in~$T\Sigma_3$.
The orthogonal complement to $v$ is just the subbundle
tangent to the $\Sph^2$-fibres in $\Sigma_3$,
so its Euler class equals twice
a generator of $\HH^2(\Sigma_3;\Z)$. Hence,
$$
W(j)=\pm 1\in\Z\cong \HH_1(\Sigma_3;\Z)
$$
by formula~\eqref{Whitney}, where the sign is again fixed
by the orientation conventions and the identification $\HH_1(\Sigma_3;\Z)\cong\Z$. Now the above remark about 
the Kreck invariant implies the following result for $n=3$.

\begin{prop}
\label{dim3}
All Lagrangian embeddings $\Sph^1\times\Sph^2\hookrightarrow\C^3$ 
are smoothly isotopic up to precomposition with~\eqref{tau}.
\end{prop}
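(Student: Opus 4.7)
The plan is to reduce the proposition to Skopenkov's classification~\cite{Sk}, under which smooth embeddings $\Sigma_3\hookrightarrow\R^6$ are determined by the pair $(W(j),\eta(j))$, with $W(j)\in\HH_1(\Sigma_3;\Z)\cong\Z$ the Whitney invariant and $\eta(j)\in\Z/|W(j)|\Z$ the Kreck invariant. The point is that once $|W(j)|=1$, the Kreck invariant automatically lives in the trivial group, so any two embeddings with the same Whitney invariant are smoothly isotopic. My goal is therefore to show that every Lagrangian $j$ has $W(j)=\pm 1$ and that the two signs are related by precomposition with~$\rho$.

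First I would use Corollary~\ref{linkreparam}: after possibly replacing $j$ by $j\circ\rho$, the Lagrangian embedding satisfies $\sigma(j,v)=0$. This is precisely the statement that the $\C$-normal vector field $\xi:=i\cdot dj(v)$ is an \emph{unlinked section} of the normal bundle of $L=j(\Sigma_3)$ in the sense of~\cite{Sk}, since its pushoff bounds in $\C^3\setminus L$. This step is where the Lagrangian hypothesis is used; without Theorem~\ref{linkv}, one has no reason to expect that the natural candidate $i\cdot dj(v)$ is unlinked.

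Next I would apply the Bo\'echat--Haefliger Invariant Lemma~\eqref{Whitney} to this specific unlinked section, which yields $2W(j)=\pm\mathop{\mathrm{PD}}\,e(\xi^\perp)$. Since $j$ is totally real, multiplication by $i$ furnishes a bundle isomorphism $\xi^\perp\cong v^\perp$, where $v^\perp$ is the complement of $v$ in $T\Sigma_3$. But $v^\perp$ is nothing other than the subbundle tangent to the $\Sph^2$-fibres of the projection $\Sigma_3\to\Sph^1$, whose Euler class is twice a generator of $\HH^2(\Sigma_3;\Z)$ (Euler number $2$ of $T\Sph^2$). Under the identification $\HH_1(\Sigma_3;\Z)\cong\Z$ via the $\Sph^1$-factor, Poincar\'e duality then gives $W(j)=\pm 1$.

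With $|W(j)|=1$, the Kreck invariant lives in $\Z/1\Z=\{0\}$, so any two Lagrangian embeddings with the same sign of $W$ are isotopic. The reflection $\rho$ is orientation-reversing on $\Sigma_3$ and, by naturality of the Haefliger--Hirsch construction in the spirit of~\eqref{linkclassreparam}, it interchanges the two possible signs of $W$; this absorbs the remaining ambiguity. The main obstacle I anticipate is the orientation and sign bookkeeping required to match Skopenkov's conventions in~\eqref{Whitney} with those fixed earlier in this paper; the geometric content itself --- unlinkedness of $\xi$ from Theorem~\ref{linkv}, and the elementary Euler class computation on the $\Sph^2$-bundle --- is already packaged by the preceding sections.
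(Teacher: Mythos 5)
Your proposal is correct and follows essentially the same route as the paper: Corollary~\ref{linkreparam} makes $i\cdot dj(v)$ an unlinked section after possibly precomposing with~\eqref{tau}, the Bo\'echat--Haefliger lemma~\eqref{Whitney} together with the Euler class of the $\Sph^2$-tangent subbundle gives $W(j)=\pm 1$, and the triviality of the Kreck invariant for $|W|=1$ finishes the argument. No substantive differences from the paper's proof.
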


Let now $n\ge 2$ be arbitrary. Trivialise the tangent bundle of $\SN$ 
using the dilation invariant vector fields
$$
\zeta_k = \|x\|\frac{\p}{\p x_k}, \quad k=1,\ldots,n,
$$
on $\R^n\setminus\{0\}$ and note that 
$$
v=\sum\frac{x_k}{\|x\|}\zeta_k.
$$
Fix also a complex linear trivialisation of $T\C^n$ so that $T_a\C^n\cong\C^n$ 
for all $a\in\C^n$.

Let $j:\SN\to\C^n$ be an embedding and let $\SN^\circ$ denote the complement
of a small $n$-ball in~$\SN$. There exists a unique up to homotopy normal vector field
$\nu_j$ on $j(\SN)$ such that it does not vanish on $\SN^\circ$  
and the pushoff of $j(\Sigma_n)$ by $\nu_j$ is nullhomologous in $\C^n\setminus j(\SN^\circ)$,
see~\cite[p.~134]{HH}. (This vector field is called the associated vector field
of $j$ in~\cite[\S 5.1]{Bo} and the Haefliger--Hirsch vector field in~\cite[\S 6.2]{DE}.
It is also the unlinked section of the normal bundle from~\cite{Sk} appearing in~\eqref{Whitney}.)

Specialising \cite[Theorem (2.2)]{HH} and the remarks right after it to our situation,
we see that for $n\ge 4$ the isotopy classes of embeddings $j$ are in one-to-one
correspondence with the homotopy classes of their {\it Haefliger--Hirsch maps}
\begin{equation}
\label{HHmap}
\SN^\circ \ni x \longmapsto \bigl(dj_x(\zeta_1(x)),\dots,dj_x(\zeta_n(x)),\nu_j(j(x)\bigr) \in \mathcal{V}_{2n,n+1}
\end{equation}
to the Stiefel manifold of real $(n+1)$-frames in $\R^{2n}\cong\C^n$, cf.~\cite[\S 5.2]{Bo}. 
(It is convenient to use arbitrary frames here but it is well-known that $\mathcal{V}_{*,*}$
retract onto Stiefel manifolds of orthonormal frames~\cite[\S 2]{St}.)

Since $ \mathcal{V}_{2n,n+1}$ is $(n-2)$-connected~\cite[Satz 8]{St} 
and $\SN^\circ$ retracts onto the wedge sum $\Sph^1\vee\Sph^{n-1}$,
it follows that for $n\ge 3$ the homotopy class of~\eqref{HHmap} is determined 
by the image of the meridional $(n-1)$-sphere $m\subset\SN^\circ$
in~$\pi_{n-1}(\mathcal{V}_{2n,n+1})$. 
The latter group is isomorphic to $\Z$ if $n$ is odd 
and to $\Z/2\Z$ if $n$ is even~\cite[Satz 9]{St},
which completes the classification of smooth embeddings
of $\SN$ in $\C^n$ for $n\ge 4$.

Assume now that $j$ is a Lagrangian embedding. By Corollary~\ref{linkreparam},
up to precomposing $j$ with the reflection~\eqref{tau} for odd~$n$,
we may choose $\nu_j = i\cdot dj(v)$ so that the map~\eqref{HHmap}
takes the form
$$
x \longmapsto \left(dj_x(\zeta_1(x)),\dots,dj_x(\zeta_n(x)), i\sum\limits_{k=1}^{n} \frac{x_k}{\|x\|} dj_x(\zeta_k(x))\right).
$$
Furthermore, since a Lagrangian embedding is totally real, the vectors $dj_x(\zeta_k(x))$
form a complex frame of $\C^n$ for every $x\in\SN^\circ$. Therefore there is a smooth
map
$$
\Phi: \SN^\circ \longrightarrow \mathrm{GL}(n,\C)
$$
such that 
$$
dj_x(\zeta_k(x)) = \Phi(x)e_k,
$$
where $e_k$, $k=1,\ldots,n$, is our fixed complex frame of~$\C^n$. 
Hence, the map~\eqref{HHmap} can be written as
\begin{equation}
\label{HHmapLagr}
x \longmapsto \left(\Phi(x)e_1,\dots,\Phi(x)e_n, \Phi(x)\sum\limits_{k=1}^{n} \frac{x_k}{\|x\|} ie_k\right).
\end{equation}

The action of $\mathrm{GL}(n,\C)\subset \mathrm{GL}_+(2n,\R)$ on $\mathcal{V}_{2n,n+1}$ 
induces a trivial map on the relevant homotopy group $\pi_{n-1}$ for all $n\ne 2, 4$
by the proof of \cite[Lemma 6.14]{DE}. Specifically,
\begin{itemize}
\item[(1)] if $n$ is odd, then $\pi_{n-1}(\mathrm{GL}(n,\C))=\pi_{n-1}(\mathrm{U}(n))=0$;
\item[(2)] if $n\ne 2, 4, 8$ is even, then the map 
$$\pi_{n-1}(\mathrm{GL}_+(2n,\R))\longrightarrow \pi_{n-1}(\mathcal{V}_{2n,n+1})=\Z/2\Z
$$
induced by the action is trivial;
\item[(3)] if $n=8$, then the map in (2) is surjective but the map 
$$
\Z=\pi_{7}(\mathrm{GL}(8,\C))\longrightarrow \pi_{7}(\mathrm{GL}_+(16,\R))=\Z
$$ 
is the multiplication by~$2$, as observed already in~\cite[\S 5.3]{Bo}.
\end{itemize}
Thus, for $n\ne 2, 4$, the Haefliger--Hirsch map~\eqref{HHmapLagr} of any Lagrangian embedding 
(precomposed with~\eqref{tau} if $n$ is odd and $\sigma(j,v)\ne 0$)
is homotopic to the standard map
\begin{equation}
\label{HHstandard}
x \longmapsto \left(e_1,\dots, e_n, \sum\limits_{k=1}^{n} \frac{x_k}{\|x\|} ie_k\right),
\end{equation}
which proves the following result for all $n\ge 5$. 

\begin{prop}
\label{dimn}
All Lagrangian embeddings $\Sph^1\times\Sph^{n-1}\hookrightarrow\C^n$, $n\ge 5$,
are smoothly isotopic up to precomposition with~\eqref{tau} if $n$ is odd.
\end{prop}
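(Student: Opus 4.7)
The plan is to reduce the classification to a purely homotopy-theoretic question that the preceding material has already set up. I would start by invoking the Haefliger--Hirsch h-principle \cite[Theorem (2.2)]{HH}: for $n\ge 4$, smooth isotopy classes of embeddings $\SN\hookrightarrow\C^n$ are in bijection with homotopy classes of the frame maps \eqref{HHmap} from $\SN^\circ$ into the Stiefel manifold $\mathcal{V}_{2n,n+1}$. Combining this with the facts that $\mathcal{V}_{2n,n+1}$ is $(n-2)$-connected and that $\SN^\circ$ deformation retracts onto $\Sph^1\vee\Sph^{n-1}$, the homotopy class of such a map is, for $n\ge 3$, fully determined by the class in $\pi_{n-1}(\mathcal{V}_{2n,n+1})$ obtained by restricting to the meridional sphere $m\subset\SN^\circ$.

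Next, for a Lagrangian $j$, I would apply Corollary~\ref{linkreparam} to arrange---after precomposing with \eqref{tau} when $n$ is odd and $\sigma(j,v)\ne 0$---that the Haefliger--Hirsch normal field can be taken to be $\nu_j = i\cdot dj(v)$. Total reality of $j$ then yields a smooth map $\Phi:\SN^\circ\to\mathrm{GL}(n,\C)$ with $dj_x(\zeta_k(x))=\Phi(x)e_k$, writing the Haefliger--Hirsch map in the form \eqref{HHmapLagr}. The goal is to show that this map is homotopic to the standard model \eqref{HHstandard} independently of $\Phi$; by the previous paragraph this reduces to checking that the natural $\mathrm{GL}(n,\C)$-action on $\mathcal{V}_{2n,n+1}$ induces the trivial map on $\pi_{n-1}$.

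The final step is the homotopy-theoretic verification, conveniently carried out in \cite[Lemma 6.14]{DE}. For odd $n\ge 5$ it is immediate from Bott periodicity since $\pi_{n-1}(\mathrm{U}(n))=0$. For even $n\ne 8$ the action factors through the already-trivial map $\pi_{n-1}(\mathrm{GL}_+(2n,\R))\to\pi_{n-1}(\mathcal{V}_{2n,n+1})$. I expect the one delicate point to be $n=8$: here the map $\pi_7(\mathrm{GL}_+(16,\R))=\Z\twoheadrightarrow\pi_7(\mathcal{V}_{16,9})=\Z/2\Z$ is nontrivial, but the inclusion $\mathrm{U}(8)\hookrightarrow\mathrm{GL}_+(16,\R)$ induces multiplication by $2$ on $\pi_7$, so the composition vanishes---this is the observation already made in \cite[\S 5.3]{Bo}. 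Once this is verified, \eqref{HHmapLagr} is homotopic to \eqref{HHstandard} for every $n\ge 5$, and all Lagrangian embeddings share a single Haefliger--Hirsch class, which yields the proposition.
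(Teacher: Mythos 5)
Your proposal is correct and follows essentially the same route as the paper: the Haefliger--Hirsch $h$-principle reduces the question to the class of the meridional sphere in $\pi_{n-1}(\mathcal{V}_{2n,n+1})$, Corollary~\ref{linkreparam} normalises the unlinked normal field to $i\cdot dj(v)$ after a possible precomposition with~\eqref{tau}, and the conclusion rests on the triviality of the $\mathrm{GL}(n,\C)$-action on $\pi_{n-1}(\mathcal{V}_{2n,n+1})$ with exactly the same case analysis, including the factor of~$2$ on $\pi_7$ for $n=8$. No gaps.
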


\begin{rem}
\label{dim4}
If $n=4$, the following argument from~\cite[p.~232]{Bo} shows that 
both isotopy classes of embeddings $j:\Sigma_4\to\C^4$ have Lagrangian representatives.
Note first that (for any $n\ge 3$) the action of the map~$\Phi$ in~\eqref{HHmapLagr} on $\pi_{n-1}(\Sigma_n^\circ)$ 
can be chosen at will by~\cite[\S 6.2]{Au} and the $h$-principle for 
Lagrangian immersions of~$\Sph^{n-1}$~\cite[\S 24.3]{CEM}. However, the map 
$$
\Z=\pi_3(\mathrm{GL}(4,\C))\longrightarrow \pi_3(\mathcal{V}_{8,5})=\Z/2\Z
$$
is surjective and so each homotopy class of maps $\Sigma_4^\circ\to \mathcal{V}_{8,5}$
can be represented by the Haefliger--Hirsch map of a Lagrangian embedding. 
\end{rem}

\begin{rem}
\label{dim3alt}
For $n=3$, the proof of Proposition~\ref{dimn} amounts to 
an alternative computation of the Whitney invariant. 
Let $\mathcal{N}\to\mathcal{V}_{2n,n+1}$ be the
tautological normal bundle. (The fibre of $\mathcal{N}$
at a frame is the orthogonal complement to the span
of the frame.) An orientation on $\R^{2n}$ defines
an orientation on $\mathcal{N}$. Two observations are
immediate:
\begin{enumerate}
\item The pull-back of $\mathcal{N}$ by the
Haefliger--Hirsch map~\eqref{HHmap} is the orthogonal
complement $\nu_j^\perp(=\xi^\perp)$ in the normal bundle of~$j$.
\item The pull-back of $\mathcal{N}$ to the 
meridional $2$-sphere $m=\{\|x\|=1\}$
by the standard map~\eqref{HHstandard} is the tangent
bundle of~$m$.  
\end{enumerate}
For a Lagrangian embedding $j$ (precomposed with~\eqref{tau} if needed), 
the maps~\eqref{HHmap} and~\eqref{HHstandard} are homotopic. 
Hence, $\langle e(\xi^\perp),[m]\rangle = 2$
and therefore $W(j)=\pm 1\in \Z\cong\HH_1(\Sigma_3;\Z)$ by~\eqref{Whitney}.
\end{rem}

It remains to show that precomposition with the involution $\rho$ of $\SN$ 
defined by~\eqref{tau} does indeed change the smooth isotopy class of a Lagrangian 
embedding for odd~$n$. Swapping $j$ and $j\circ\rho$ if necessary, we may
assume that $\sigma(j,v)=0$ by Corollary~\ref{linkreparam} so that
$\sigma(j\circ\rho,-v)=0$ by~\eqref{linkclassreparam}. 
Hence, the argument leading to Proposition~\ref{dimn}
shows that the Haefliger--Hirsch maps of $j$ and $j\circ\rho$ are 
homotopic to
\begin{equation}
\label{HHstandardpm}
x \longmapsto \left(e_1,\dots, e_n, \pm \sum\limits_{k=1}^{n} \frac{x_k}{\|x\|} ie_k\right)
\end{equation}
with the $+$ sign for $j$ and the $-$ sign for $j\circ\rho$.
The restrictions of the maps~\eqref{HHstandardpm} to the meridional sphere $m=\{\|x\|=1\}$ 
parametrise the (homotopy) fibre $\Sph^{n-1}\sim \mathcal{V}_{n,1}\times\R^n$ of 
the `forgetful' fibre bundle 
$$
\mathcal{V}_{2n,n+1} \longrightarrow \mathcal{V}_{2n,n}
$$
and differ by the antipodal map of $\Sph^{n-1}$. For odd $n$, 
the antipodal map is orientation reversing and hence acts non-trivially 
on $\pi_{n-1}(\Sph^{n-1})=\Z$.
Furthermore, $\pi_{n-1}(\mathcal{V}_{2n,n})=0$ by~\cite[Satz 8]{St}
and $\pi_n(\mathcal{V}_{2n,n})=\Z/2\Z$ for odd $n>1$ by~\cite[Satz 9]{St},
so the homotopy exact sequence 
$$
\pi_n(\mathcal{V}_{2n,n}) \to \pi_{n-1}(\Sph^{n-1}) \to \pi_{n-1}(\mathcal{V}_{2n,n+1}) \to \pi_{n-1}(\mathcal{V}_{2n,n})
$$
shows that the embedding of the fibre is an isomorphism on $\pi_{n-1}$.
Hence, the two maps in~\eqref{HHstandardpm} are not homotopic, which proves the next statement.

\begin{prop}
\label{reflectodd}
A Lagrangian embedding $\Sph^1\times\Sph^{n-1}\hookrightarrow\C^n$ with odd~$n\ge 3$
is not smoothly isotopic to its composition with~\eqref{tau}.
\end{prop}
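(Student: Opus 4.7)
The strategy is to lift the problem from embeddings to equivariant frame maps via Haefliger--Hirsch and exploit the orientation-sensitivity of the antipodal map of $\Sph^{n-1}$ in odd dimensions. By the classification already invoked for Proposition~\ref{dimn}, it suffices to show that the Haefliger--Hirsch maps~\eqref{HHmap} of $j$ and $j\circ\rho$ represent distinct elements of $\pi_{n-1}(\mathcal{V}_{2n,n+1})\cong\Z$, read off from the restriction to the meridional sphere $m\subset\SN^\circ$.

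First I will normalise: by Corollary~\ref{linkreparam} I may assume $\sigma(j,v)=0$, and then~\eqref{linkclassreparam} gives $\sigma(j\circ\rho,-v)=0$ since $\rho$ reverses the orientation of $\Sph^1$ (and hence of $\SN$ for odd $n$). Consequently the unlinked normal vector field of Haefliger--Hirsch can be chosen to be $i\cdot dj(v)$ for $j$ and $-i\cdot d(j\circ\rho)(v)$ for $j\circ\rho$. Running the reduction from the proof of Proposition~\ref{dimn} on both embeddings, the two Haefliger--Hirsch maps become homotopic to the standard models~\eqref{HHstandardpm} with the $+$ and $-$ sign, respectively.

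Next I restrict these two model maps to $m=\{\|x\|=1\}$. The first $n$ entries of~\eqref{HHstandardpm} are constant along $m$, so both restrictions land in the fibre of the forgetful projection
\[
\mathcal{V}_{2n,n+1}\longrightarrow \mathcal{V}_{2n,n}
\]
over $(e_1,\dots,e_n)$, which is the unit sphere $\Sph^{n-1}$ in the real $n$-plane $\mathrm{span}_\R(ie_1,\dots,ie_n)$. On this fibre the two restrictions differ exactly by the antipodal map. For odd $n$ the antipodal map has degree $-1$ on $\Sph^{n-1}$, so the two restrictions represent opposite generators of $\pi_{n-1}(\Sph^{n-1})=\Z$.

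Finally I have to transport this non-homotopy from the fibre to the total space $\mathcal{V}_{2n,n+1}$. This is the crux, and the only real obstacle: I must show that the fibre inclusion induces an isomorphism $\pi_{n-1}(\Sph^{n-1})\overset{\cong}{\to}\pi_{n-1}(\mathcal{V}_{2n,n+1})$ for odd $n$. The relevant segment of the long exact sequence reads
\[
\pi_n(\mathcal{V}_{2n,n}) \longrightarrow \pi_{n-1}(\Sph^{n-1}) \longrightarrow \pi_{n-1}(\mathcal{V}_{2n,n+1}) \longrightarrow \pi_{n-1}(\mathcal{V}_{2n,n}),
\]
and by~\cite[Satz 8, Satz 9]{St} the right term vanishes while the left is $\Z/2\Z$. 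Since any homomorphism from a torsion group to the torsion-free group $\pi_{n-1}(\Sph^{n-1})=\Z$ is zero, both outer maps vanish and the fibre inclusion is a $\pi_{n-1}$-isomorphism as required. This forces the two Haefliger--Hirsch classes to be nontrivially distinct, completing the proof.
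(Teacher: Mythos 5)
Your proposal is correct and follows essentially the same route as the paper: normalise via Corollary~\ref{linkreparam}, reduce both Haefliger--Hirsch maps to the models~\eqref{HHstandardpm}, observe that their restrictions to $m$ differ by the antipodal map of the fibre $\Sph^{n-1}$, and use the exact sequence of $\mathcal{V}_{2n,n+1}\to\mathcal{V}_{2n,n}$ (with the torsion-to-$\Z$ observation making explicit why the connecting map vanishes) to conclude the fibre inclusion is a $\pi_{n-1}$-isomorphism. The only nit is the parenthetical claim that $\rho$ reverses the orientation of $\SN$ \emph{for odd $n$} --- it does so for all $n$, though this has no effect on your argument since you only compare linking classes to zero.
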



Theorem~\ref{mainemb} follows by combining Propositions~\ref{dim3}, \ref{dimn}, and~\ref{reflectodd}.

\subsection*{Acknowledgements}
The author is grateful to Leonid Polterovich, Felix Schlenk, and Arkadiy Skopenkov
for their comments on the first draft of this paper.

\end{document}